\title[Dimension of monopoles on a.c.\ 3-mflds]{Dimension of monopoles on asymptotically conic 3-manifolds}
\author{Chris Kottke}
\address{Northeastern University\\Department of Mathematics}
\email{c.kottke@neu.edu}
\subjclass[2010]{Primary 81T13; Secondary 58J20,53C07}
\date{\today}
\begin{document}
\maketitle
\begin{abstract}
The virtual dimensions of both framed and unframed SU(2) magnetic monopoles on
asymptotically conic 3-manifolds are obtained by computing the index of a
Fredholm extension of the associated deformation complex. The unframed
dimension coincides with the one obtained by Braam for conformally compact
3-manifolds. The computation follows from the application of a Callias-type
index theorem.
\end{abstract}

\section{Introduction}

Magnetic monopoles have been studied in a variety of settings, going back to
the original work \cite{jaffe1980vortices,taubes1983stability,taubes1984monopoles} of Taubes, who
demonstrated that the moduli space $\frM_k(\bbR^3)$ of charge-$k$ monopoles on
$\bbR^3$ is a smooth, nonempty manifold of dimension $4k.$ Atiyah considered
the moduli space $\cM_k(\bbH^3)$ in \cite{atiyah1984magnetic}, and in
\cite{braam1989magnetic} Braam considered $\cM_k(X)$ for a general conformally
compact 3-manifold $X.$ In the posthumously published work
\cite{floer1995monopoles,floer1995configuration}, Floer outlined a construction
of monopoles on spaces with asymptotically Euclidean ends.

Here we consider an arbitrary asymptotically conic (a.k.a scattering)
3-manifold $(X,g)$, meaning $X$ is a manifold with boundary and $g$ has the
form $g = \tfrac{dx^2}{x^4} + \tfrac{h}{x^2}$, where $x$ is a boundary defining
function and $h$ restricts to a metric on $\pa X$. The usual definition of an
asymptotically conic manifold appearing in the literature, in terms of a radial
function $r$, is recovered by setting $x = 1/r$. Examples include the radial
compactification of $\R^3$, ALE spaces, and manifolds with Euclidean ends, as
well as manifolds with more general boundary surfaces. A monopole is a 
configuration $(A,\Phi)$ where $A$ is a connection on a fixed principal
$\SU(2)$-bundle $P \to X$ and $\Phi$ is a section of $\ad P$ satisfying the
Bogomolny equation
\begin{equation}
	\star F_A = d_A \Phi,
	\label{E:Bogo_intro}
\end{equation}
where $F_A$ is the curvature of $A$. Since the equation is gauge invariant, the
gauge group $\cG = \Gamma(X; \Ad P)$ acts on solutions, and the {\em charge $k$ monopole
moduli space}, $\cM_k(X)$, is the space of equivalence classes of solutions to
\eqref{E:Bogo_intro}, where $k \in \bbZ^{b^0(\pa X)}$ is a collection of
integers given by topological invariants of $\Phi$ over the components of $\pa
X$. Alternatively, one may consider the space of {\em framed monopoles},
where the boundary data $(A,\Phi)\rst_{\pa X}$ is fixed and equivalence is
taken with respect to the reduced gauge group $\cG_0$ which acts by the
identity at $\pa X$. This space is denoted $\frM_k(X).$ 

The {\em deformation complex} at a solution $(A,\Phi)$ is the elliptic complex
\begin{equation}
	T_1\cG \stackrel{\ssD_1}\to T_{(A,\Phi)} \cC_k \stackrel{\ssD_2}\to \Gamma(X; \Lambda^1\otimes \ad P)
	\label{E:def_complex_intro}
\end{equation}
where $\ssD_1$ is the infinitesimal action of the Lie algebra $T_1 \cG =
\Gamma(X; \ad P)$ of the gauge group, and $\ssD_2$ is the linearization of
\eqref{E:Bogo_intro} acting on the tangent space $T_{(A,\Phi)} \cC_k =
\Gamma(X; (\Lambda^1\oplus \Lambda^0)\otimes \ad P)$ to the configurations at
$(A,\Phi)$.  The tangent space to the moduli space, $T_{(A,\Phi)} \cM_k$, may
be formally identified with the middle degree cohomology of the deformation
complex, so in particular $\dim(\cM_k) = \dim(\kernel \ssD_2/\image \ssD_1)$,
while the {\em virtual dimension} is the Euler characteristic
\[
	\vdim(\cM_k) = \dim(\kernel \ssD_2/\image \ssD_1) - \dim\kernel \ssD_1 - \dim \cokernel \ssD_2.
\]
A similar deformation complex may be considered for framed monopoles, taking
$T_1 \cG_0$ to be sections of the gauge algebra which vanish at $\pa X$ and
$T_{(A,\Phi)}\cC_k$ to be perturbations fixing the boundary data. 

We define a family of completions of \eqref{E:def_complex_intro} as Hilbert complexes:
\begin{equation}
	\cH^{\gamma-1,2}(X; \ad P) \stackrel{\ssD_1}{\to} \cH^{\gamma,1}(X; (\Lambda^1\oplus \Lambda^0)\otimes \ad P) \stackrel{\ssD_2}{\to} \cH^{\gamma+1,0}(X; \Lambda^1\otimes \ad P)
	\label{E:hilb_complex_intro}
\end{equation}
where $\gamma \in \bbR$ is a real parameter. (These spaces are defined in
detail in \S\ref{S:fred_extn}; some notation is suppressed here.) These are
Sobolev spaces contained within weighted $L^2$ spaces:
\[
	\cH^{\gamma,l} \subset x^\gamma L^2,
\]
which for $\gamma \leq -\tfrac{1}{2}$ give Hilbert completions of the unframed
deformation complex and for $\gamma > -\tfrac {1}{2}$ give completions of the
framed complex. The main result of this paper is:

\begin{thm}
The complex \eqref{E:hilb_complex_intro} is Fredholm (i.e., has finite
dimensional cohomology) for $\gamma \in (-\tfrac{1}{2},-\tfrac{1}{2} +
\lambda_1)$ and for $\gamma \in (-\tfrac{3}{2},-\tfrac{1}{2})$, where $\lambda_1
= \sqrt{\nu_1 + \tfrac 1 4} - \tfrac 1 2$ and $\nu_1$ is the least positive
eigenvalue of $\Delta_{\pa X}.$ Furthermore
\[
\begin{aligned}
	\vdim\frM_k(X) 
	&= 4\ul k - \tfrac 1 2 b^1(\pa X),
	= \ind\big(\ssD_2(\gamma) + \ssD_1(\gamma)'\big) 
	&  \gamma &\in (-\tfrac 1 2,-\tfrac{1}{2} + \lambda_1),
	\\\vdim\cM_k(X) 
	&= 4\ul k + \tfrac 1 2 b^1(\pa X) - b^0(\pa X), 
	= \ind\big(\ssD_2(\gamma) + \ssD_1(\gamma)'\big) 
	& \gamma &\in (-\tfrac{3}{2},-\tfrac{1}{2}),
\end{aligned}
\]
where the total charge $\ul k = \sum_{i = 1}^{b^0(\pa X)} k_i$ is the sum of
the charges over the ends of $X$, and $b^i(\pa X)$ denotes the $i$th Betti
number $\pa X$.
\label{T:intro_thm}
\end{thm}
\noindent Several remarks are in order:
\begin{itemize}
\item 
Theorem~\ref{T:intro_thm} gives a new proof of the classical results
$\vdim\big(\cM_k(\bbR^3)\big) = 4k$ and $\vdim\big(\frM_k(\bbR^3)\big) = 4k -
1$, which are the true moduli dimensions in this case, since $\bbR^3$ is a
scattering manifold with one end, $\pa \bbR^3 = S^2$, for which $b^1(S^2) = 0$
and $b^0(S^2) = 1$. 
\item
The virtual dimensions may be re-expressed using the identity $\tfrac 1 2
b^1(\pa X) = b^1(X) - b^2(X) +(b^0(\pa X) - 1)$, which follows from duality and
the long exact sequence in cohomology of the pair $(X,\pa X)$.  The virtual
dimension $\vdim(\cM_k(X)) = 4\ul k + b^1(X) - b^2(X) - 1$ coincides with the
one obtained by Braam in \cite{braam1989magnetic} for conformally compact
manifolds, even though that setting is quite different from an analytical point
of view. 
\item 
The difference $\vdim(\cM_k(X)) - \vdim(\frM_k(X)) = b^1(\pa X) - b^0(\pa X)$
has a geometric interpretation in terms of the moduli space of monopole
boundary data and the action of the gauge group on such data. This is discussed
in \S\ref{S:framing} below.
\item
If $\nu_1 > \tfrac{3}{2}$, then the range of $\gamma$ for which the framed
deformation complex is Fredholm includes $\gamma = 0$, at which value the
infinitesimal perturbations of $(A,\Phi)$ are contained in $L^2$, and $T\frM_k$
inherits a Riemannian metric in terms of the $L^2$ pairing. In the classical
case $X = \bbR^3$ (for which $\nu_1(S^2) = 2$), this metric is famously known
to be complete and hyperk\"ahler \cite{atiyah1988geometry}.
\end{itemize} 

In \S\ref{S:def} we discuss the definition of monopoles on a scattering
manifold, consider the issues around framing and set up the deformation complex
along with the precise family of Hilbert completions of this complex that we
shall consider. The starting point for the proof of Theorem~\ref{T:intro_thm}
is a generalized Callias-type index theorem, Theorem~\ref{T:index_theorem}
below, which is proved in \cite{kottke2013callias}; we recall this result in
\S\ref{S:callias_review}. In \S\ref{S:comp} we apply this theorem to the Dirac
operators obtained from the Hilbert complex \eqref{E:def_complex_intro},
arriving at the result above.

The main analytical feature of our theory is this: the family of Hilbert
complexes leads to to a family of Sobolev extensions for the associated Dirac
operator $\ssD_2(\gamma) + \ssD_1(\gamma)'$, depending in particular on a
weight parameter $\gamma \in \bbR$. These extensions are Fredholm for $\gamma$
outside of a discrete set of {\em indicial roots} (which here have expressions
in terms of the eigenvalues of the Laplacian on $\pa X$), and the index of the
extension changes (by the dimension of the associated eigenspace) as $\gamma$
varies. This phenomenon of variable index Fredholm extensions on weighted
Sobolev spaces goes back to the work of Lockhart and McOwen
\cite{lockhart1995elliptic}, the `b-calculus' of Melrose
\cite{melrose1993atiyah}, as well as the work of Schulze et.\ al.\
\cite{schulze1998index}. More recently, it has appeared in a range of settings
including problems in scattering theory \cite{borthwick2001scattering},
\cite{guillarmou2008resolvent}, closed extensions of conic differential
operators \cite{gil2003adjoints}, and of operators on stratified spaces
\cite{albin2012signature}, among many others. Here the behavior of the operator
at infinity leads to the use of `hybrid' b-/scattering-type Sobolev spaces
adapted to a splitting of the vector bundle there. Similar hybrid Sobolev
spaces have also appeared in \cite{hausel2004hodge} and
\cite{grieser2014parametrix}. One novelty of the problem presented here is
that some of the indicial roots themselves depend on the parameter $\gamma$, so
that the index can both increase {\em and} decrease as $\gamma$ decreases (see
Figure~\ref{F:bspec} and the associated discussion in \S\ref{S:comp_index}).

Finally, we expect that the approach described here to the computation of
monopole moduli dimensions, via the application of the Callias-type index
theorem \cite{kottke2013callias} to the deformation complex, should generalize
to quite a few situations of interest. Among these we mention monopoles with
higher rank gauge groups on $\R^3$ \cite{murray2003note} as well as more
general asymptotically conic manifolds, and monopoles on higher dimensional
manifolds with special holonomy; Oliveira in \cite{oliveira2013monopoles} has
recently obtained some results regarding monopoles on Bryant-Salamon G2
manifolds.
 
\subsection*{Acknowledgments} The present paper represents an extension of the
author's PhD thesis work, and he is grateful to his thesis advisor Richard
Melrose for his support and guidance. The author has also benefited from
numerous helpful discussions with Michael Singer, Gon\c calo Oliveira and
Pierre Albin, and would like to thank Kiril Datchev for his comments on the
manuscript.

\section{Monopoles and deformation} \label{S:def}
Let $(X,g)$ be a 3-dimensional manifold with boundary with $g$ a Riemannian
scattering metric on the interior of $X$. By a result of Joshi and Sa Baretto
\cite{joshi1999recovering}, we may assume $g$ is an {\em exact scattering
metric}, i.e.\ of the form
\begin{equation}
	g = \frac{dx^2}{x^4} + \frac{h}{x^2}
	\label{E:ex_sc_metric}
\end{equation}
near $\pa X$ with respect to a fixed boundary defining function $x,$ where $h$
is a bounded family of metrics on $\pa X$.  

Fix a principal $\SU(2)$ bundle $P \to X$. ($P$ is necessarily trivializable
since $\SU(2)$ is $2$-connected, though we do not fix a trivialization.) The
{\em configuration space}, $\cC(X)$, for magnetic monopoles consists of pairs
$(A,\Phi)$ where $A$ is a connection on $P$ and $\Phi$ is a section of $\ad P =
P\times_{\ad} \su(2)$, a bundle which we equip with a Hermitian inner product
given by the negative of the  Killing form.  It is unreasonable to expect
monopoles on a general $X$ to be smooth, so we consider configurations which
are {\em bounded polyhomogeneous}, meaning they are smooth on the interior of
$X$, continuous up to the boundary, and have complete asymptotic expansions at
$\pa X$ in real powers of $x$ and non-negative integer powers of $\log x$ (see
\cite{kottke2013callias} for a more detailed discussion). Thus
\[
	\cC(X) = \cA(P)\times \Gamma(X; \ad P),
\]
where $\cA(P)$ is an affine space modelled on $\Gamma(X; T^*X\otimes \ad P)$
and we use the notation $\Gamma(X;V)$ to denote bounded polyhomogeneous
sections of a vector bundle $V.$ 

The configuration space is acted on by the {\em gauge group} $\cG(X) = \Gamma(X;
\Ad\, P)$, and a magnetic monopole is a gauge equivalence class of solutions to
the {\em Bogomolny equation}
\begin{equation}
	\cB(A,\Phi) = \star F_A - d_A \Phi = 0.
	\label{E:bogomolny}
\end{equation}
where $F_A$ is the curvature of $A$ and $d_A$ is the covariant derivative
defined by $A.$ Monopoles are minimizers of the Yang-Mills-Higgs action
\begin{equation}
	(A,\Phi) \mapsto \norm{F_A}^2_{L^2} + \norm{d_A \Phi}^2_{L^2},
	\label{E:action}
\end{equation}
and the part of $\cC(X)$ on which the action is finite decomposes into
connected components $\cC_k(X)$ indexed by an integral parameter $k \in
\bbZ^{b^0(\pa X)}$ known as the {\em charge}. Indeed, since $X$ is complete,
finite action implies that $(d_A \Phi)\rst_{\pa X}$ vanishes, so $\abs{\Phi}
\rst_{\pa X} = m$ is a constant known as the {\em mass} which we assume is
strictly positive and fix throughout, and then the charge is defined by
\begin{equation}
\begin{gathered}
	k = c_1(L) \in H^2(\pa X; \Z) \cong \Z^{b^0(\pa X)}, \\
	\quad \Phi\rst_{\pa X} \cong \begin{pmatrix} im & 0\\0 & -im\end{pmatrix} \in \Gamma\bpns{\pa X; \End(L\oplus L^{-1})}.
\end{gathered}
	\label{E:eigenbundle_L}
\end{equation}
Here $L$ is the line bundle spanned by the positive imaginary eigenvectors of
$\Phi \rst_{\pa X}$ on the $\C^2$ bundle over $\pa X$ associated to the
standard representation of $\SU(2)$---in other words, viewing $\Phi \rst_{\pa
X}$ as a skew-adjoint $2\times2$ matrix. 

We denote the (unframed) moduli space of charge $k$ monopoles by
\begin{equation}
\begin{gathered}
	\cM_k(X) = \cB^{-1}(0) / \cG(X), 
	\\ \cB : \cC_k(X) \to \Gamma(X; T^\ast X\otimes \ad P).
\end{gathered}
	\label{E:unframed_moduli}
\end{equation}
(The space also depends on the mass $m$, but we suppress this from the
notation.) We will also consider {\em framed monopoles}, wherein the boundary
data $(A_0,\Phi_0) = (A,\Phi)\rst_{\pa X}$ is fixed and the gauge group is
restricted to the subgroup $\cG_0(X) = \set{g \in \cG : g \rst_{\pa X} = 1}$.
We denote the framed moduli space of charge $k$ monopoles by
\begin{equation}
\begin{gathered}
	\frM_k(X) = \frM_k(X; A_0,\Phi_0) = \cB^{-1}(0)/\cG_0(X), 
	\\ \cB : \cC_k(X; A_0,\Phi_0) \to \Gamma(X; T^\ast X\otimes \ad P), 
	\\ \cC_k(X; A_0,\Phi_0) = \set{(A,\phi) \in \cC_k(X) : (A,\phi)\rst_{\pa X} = (A_0,\Phi_0)}.
\end{gathered}
	\label{E:framed_moduli}
\end{equation}

\subsection{Framing and monopole boundary data} \label{S:framing}

To appreciate the relative dimension of $\cM_k(X)$ versus $\frM_k(X)$, some
further discussion of framing is in order. First of all, the conditions $\abs
\Phi \rst_{\pa X} = m$ and $d_A \Phi \rst_{\pa X} = 0$ imply that the bundle
$P$ and connection $A$ admit a reduction over $\pa X$ to a principal bundle $Q
\to \pa X$ with structure group $\UU(1)$, the stabilizer of $\Phi \rst_{\pa
X};$ this is nothing more than the frame bundle of the line bundle $L \to \pa X$
described above. 

In fact more can be said. Consider the expansion
\[
	\Phi \sim \Phi_0 + \cdots + \Phi_1 x + \cO(x^{1 + \epsilon})
\]
with respect to the fixed boundary defining function $x$ (we ignore any
asymptotics between $x^0$ and $x^1$, since the coupling in the Bogomolny
equation occurs only between coefficients with integer offsets). Imposing the
Bogomolny equation formally implies:
\[
\begin{gathered}
	d_{A_0} \Phi_0 = 0, \quad [\Phi_0,\Phi_1] = 0, \\
	d_{A_0} \Phi_1 = 0, \quad \Phi_1 = \star_{\pa X} F_{A_0},
\end{gathered}
\]
where $A_0$ denotes the restriction of $A$ to $\pa X$ and $F_{A_0}$ denotes its
curvature. It follows from $d_{A_0} {\star F_{A_0}} = 0$ that there exists a
reduction $(Q,A_0)$ of $(P,A_0) \rst_{\pa X}$ to a $\UU(1)$-bundle with
connection such that $\star F_{A_0}$ and $\Phi_0$ are {\em constant} (c.f.\
\cite{atiyah1983yang}, proof of Theorem~6.7). Fixing such a reduction reduces
the gauge group $\cG(X)$ to the subgroup having boundary values in the $\UU(1)$
gauge group $C^\infty(\pa X; \Ad\, Q).$

Thus the space of charge $k$ monopole boundary data can be regarded as the
space of connections on the degree $k$ $\UU(1)$-bundle $Q_k \to \pa X$ (which
is unique up to isomorphism) with prescribed constant curvature (meaning a
constant multiple of the volume form), up to gauge. If $b^1(\pa X) = 0$, all
such connections are gauge equivalent, so the space of monopole boundary data
is discrete. However if $b^1(\pa X) \neq 0$, one can alter $(Q,A_0)$ by
tensoring with a flat connection (the space of which is the torus $H^1(\pa X;
\UU(1))$) and these are generally gauge inequivalent. Thus, denoting the moduli
space of monopole boundary data by $(\pa \cM)_k(X)$, we expect in general that
\begin{equation}
	\dim (\pa \cM)_k = b^1(\pa X).
	\label{E:dim_mon_boundary_data}
\end{equation}
Restriction defines a map $R : \cM_k(X) \to (\pa \cM)_k(X)$, and
\eqref{E:dim_mon_boundary_data} accounts for part of the expected difference in
dimension between $\cM_k(X)$ and $\frM_k(X)$. However, $\frM_k(X)$ is {\em not}
simply given by $R^{-1}([A_0,\Phi_0])$; there is an additional contribution
coming from the gauge group. 

Recall that in the classical case $\frM_k(\bbR^3) \to \cM_k(\bbR^3)$ is a
circle bundle; the extra dimension is accounted for by the fact that there is
an explicit one-parameter subgroup of $\cG(\bbR^3)$, namely $\set{\exp (\lambda
\Phi) : \lambda \in \bbR}$, which acts freely on $\cC_k(\bbR^3)$,  $k \neq 0$,
but which fixes the boundary data and yet does not lie in $\cG_0(X)$ (see for
instance \cite{atiyah1988geometry}). This may be generalized to the present
case, in which there is a $b^0(\pa X)$-dimensional subgroup acting freely but
fixing the boundary data; it is generated by
\[
	\bbR \ni \lambda \mapsto \exp(\lambda \chi_i \Phi), \quad i = 1, \ldots, b^0(\pa X),
\]
where $\chi_i$ is a smooth cutoff near the $i$th component of $\pa X.$ That
these gauge transformations act non-trivially if $k \neq 0$ while fixing the
boundary data can be seen from the infinitesimal action \eqref{E:inf_gauge}
below. This subgroup acts on framed monopole configurations, and yet two 
configurations differing by such a transformation are not regarded as equivalent,
since the quotient in \eqref{E:framed_moduli} is by $\cG_0(X)$, which does not
contain the subgroup in question.

In light of these two considerations it is reasonable to expect that
\[
	\dim\cM_k(X) - \dim\frM_k(X) = b^1(\pa X) - b^0(\pa X)
\]
in general. Though this equation is merely heuristic at this point, it is borne
out by the analysis.

\subsection{Deformation complex} \label{S:def_cplx}

The problem of computing the formal dimension of $\cM_k(X)$ or $\frM_k(X)$ is
an infinitesimal one, and may be recast in the form of an elliptic complex. We
proceed to define the deformation complex formally at first, before completing
to a Hilbert complex. In what follows, we will use the {\em scattering
cotangent bundle} $\scT^\ast X$, a rescaled cotangent bundle with respect to
which the metric \eqref{E:ex_sc_metric} is Hermitian and nondegenerate up to
the boundary (see \cite{kottke2013callias} or \cite{melrose1994spectral} for
more details). There is a natural map $T^\ast X \to \scT^\ast X$, and we will
use the shorthand $\Lambda^k$ to denote the bundle $\Lambda^k (\scT^\ast X).$

At a pair $(A,\Phi)$, the tangent space to the configuration
space is $T_{(A,\Phi)} \cC = \Gamma(X; \Lambda^1 \otimes \ad P) \oplus \Gamma(X; \ad P)$, while
the Lie algebra of the gauge group is $T_1 \cG = \Gamma(X; \ad P)$. The derivative
of the gauge action at $(A,\Phi)$ gives a map
\begin{equation}
	T_1 \cG \ni \gamma \mapsto (-d_A \gamma, -\ad\Phi(\gamma)) \in T_{(A,\Phi)} \cC,
	\label{E:inf_gauge}
\end{equation}
where $\ad \Phi = [\Phi,\cdot] \in \Gamma(X; \End(\ad P))$.  On the other hand,
linearizing the Bogomolny equation \eqref{E:bogomolny} defines a map
\begin{equation}
	d\cB : T_{(A,\Phi)} \cC \ni (a,\phi) \mapsto \star d_A a - d_A \phi + \ad\Phi(a) \in \Gamma(X; \Lambda^1\otimes \ad P).
	\label{E:lin_bogo}
\end{equation}
It is convenient at this point to make use of the isomorphism $\star : \Gamma(X;
\ad P) \cong \Gamma(X; \Lambda^3\otimes \ad P)$, after which we may 
arrange \eqref{E:inf_gauge} and \eqref{E:lin_bogo} into a sequence
\begin{equation}
\begin{gathered}
	\Gamma(X; \Lambda^3\otimes \ad P) \stackrel {\ssD_1} \to 
	  \Gamma(X; \Lambda^1\otimes \ad P)\oplus \Gamma(X; \Lambda^3\otimes \ad P) \stackrel {\ssD_2} \to
	  \Gamma(X; \Lambda^1\otimes \ad P), \\
	\ssD_1 : \gamma \mapsto (-d_A {\star \gamma}, -\ad\Phi(\gamma)), 
	  \quad \ssD_2 : (a,\phi) \mapsto \star d_A a -  d_A {\star \phi} + \ad\Phi(a),
	\label{E:def_complex}
\end{gathered}
\end{equation}
where $\ssD_1$ represents the infinitesimal gauge group action and $\ssD_2$
represents the linearization of the Bogomolny equation. The condition
\[
	\ssD_1^\ast (a,\star \phi) = 0 \iff -\delta_A a + \ad\Phi(\phi) = 0
\]
is known classically as the {\em Coulomb gauge condition,} where $\delta_A =
(d_A)^\ast = (-1)^k {\star d_A} \star$ is the formal adjoint of $d_A$ on forms
of degree $k$, with respect to the $L^2$ pairing determined by the metric and
inner product on $\ad P.$ For later reference we observe that $\ad\Phi$ is a
skew-adjoint endomorphism of $\ad P$ and $\star^\ast = \star^{-1} = \star$
since the dimension of $X$ is odd.

\begin{prop}
If $(A,\Phi)$ satisfies the Bogomolny equation \eqref{E:bogomolny}, then the
sequence \eqref{E:def_complex} is an elliptic chain complex.
\label{P:def_complex}
\end{prop}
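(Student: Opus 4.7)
\medskip
\noindent\textbf{Proof proposal.} There are two things to verify: the complex identity $\ssD_2\circ\ssD_1 = 0$ when $(A,\Phi)$ satisfies \eqref{E:bogomolny}, and exactness of the associated principal symbol sequence off the zero section.

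For the complex identity, my plan is to substitute $\ssD_1\gamma = (-d_A\gamma,\,-\star[\Phi,\gamma])$ into the formula for $\ssD_2$ from \eqref{E:def_complex} and collect the resulting terms. Three pieces appear: $-d_A^2\gamma = -[F_A,\gamma]$ by the curvature identity; $-\star[\Phi,d_A\gamma]$ from the $\star[\Phi,a]$ contribution; and $-\delta_A\star[\Phi,\gamma]$ from the $\delta_A\phi$ contribution. Using $\delta_A\star = -\star d_A$ on 0-forms in dimension three (which follows from $\delta_A = -\star d_A\star$ on 3-forms together with $\star^2 = 1$) and the graded Leibniz rule $d_A[\Phi,\gamma] = [d_A\Phi,\gamma] + [\Phi,d_A\gamma]$, the third piece rewrites as $\star[d_A\Phi,\gamma] + \star[\Phi,d_A\gamma]$; the second summand cancels the middle piece, and the result collapses to $-[F_A - \star d_A\Phi,\gamma] = -[\cB(A,\Phi),\gamma]$, which vanishes precisely by the Bogomolny equation. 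Conceptually, $\ssD_2\circ\ssD_1$ computes the derivative of $\cB$ along the gauge orbit through $(A,\Phi)$, so its vanishing at solutions is the infinitesimal form of the gauge invariance of $\cB$.

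Ellipticity I would establish by a pointwise symbol calculation, noting that the zeroth-order brackets $\star[\Phi,\cdot]$ do not contribute. Working at a point in an orthonormal coframe with $\xi = \abs{\xi}e_1$, one has $\sigma(\ssD_1)(\xi)\gamma = (i\abs{\xi}\,e_1\otimes\gamma,\,0)$, which is injective for $\xi\ne 0$. Writing the $\Lambda^3$-component as $\star f$ with $f \in \ad P$, a short computation yields $\sigma(\ssD_2)(\xi)(a,\star f) = i\abs{\xi}\bpns{e_1\wedge a - f\,e_2\wedge e_3}$. Its image spans all three basis 2-forms as $a$ and $f$ vary (surjectivity at the end), while its kernel consists of those $(a,\star f)$ with $a \in \mathrm{span}(e_1)\otimes \ad P$ and $f = 0$ --- matching the image of $\sigma(\ssD_1)(\xi)$ exactly, which gives exactness in the middle.

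The main obstacle is the sign bookkeeping in the first part: one must keep careful track of the Hodge-star conventions, particularly $\delta_A = -\star d_A\star$ on 3-forms and the identification of $\Lambda^3$-valued sections with scalar sections via $\star$, in order to arrive at the clean cancellation. Once these are pinned down, the Bogomolny equation supplies the required vanishing essentially automatically, and the symbol calculation for ellipticity is then immediate.
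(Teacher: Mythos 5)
Your argument is correct and follows essentially the same route as the paper: the identity $\ssD_2\circ\ssD_1 = -[F_A - \star d_A\Phi,\cdot]$ via the Leibniz rule and the curvature identity, followed by exactness of the principal symbol sequence. The only difference is cosmetic --- you verify symbol exactness explicitly in a frame with $\xi = \abs{\xi}e_1$, whereas the paper asserts it directly from the form of $\sigma(\ssD_1)$ and $\sigma(\ssD_2)$ --- and your sign and Hodge-star bookkeeping checks out.
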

\begin{proof}
Indeed,
\[
\begin{aligned}
	\ssD_2\,\ssD_1 \gamma &= - \star d_A d_A {\star \gamma} + d_A (\ad\Phi(\star \gamma)) - \ad\Phi(d_A {\star \gamma})
	\\&= - [\star F_A,\star \gamma] + [d_A \Phi, \star \gamma] + [\Phi,d_A {\star \gamma}] - [\Phi,d_A {\star \gamma}]
	\\&= [-{\star F_A} + d_A \Phi,\star \gamma]
\end{aligned}
\]
which vanishes if $\star F_A = d_A \Phi.$ 
At the principal symbolic level, 
\[
\begin{gathered}
	\sigma(\ssD_1)(\xi)(v_3) = (- i \xi \smwedge  {\star v_3}, 0) = (-{\star i}\xi \smiprod v_3, 0), \quad \text{and}
	\\ \sigma(\ssD_2)(\xi)(w_1,w_3) = \star i \xi \smwedge w_1 - i \xi \smwedge {\star w_3}
	= \star (i \xi \smwedge w_1 - i \xi \smiprod w_3).
\end{gathered}
\]
These determine an exact complex, since $\kernel(\sigma(\ssD_2)(\xi)) =
\set{(w_1,0) : w_1 = \xi \otimes a}$ lies in the image of
$\sigma(\ssD_1)(\xi).$
\end{proof}

From now on we assume that $(A,\Phi)$ satisfies
\eqref{E:bogomolny}. Formally speaking, the tangent space of $\cM_k$ at
$(A,\Phi)$ is represented by the the degree 1 cohomology space of
\eqref{E:def_complex}:
\[
	T_{(A,\Phi)}\cM_k = \sH^1 = \pns{\kernel \ssD_2/\image \ssD_1},
\]
and $\dim(\sH^1)$ computes the dimension of $\cM_k$ assuming it is smooth at
$(A,\Phi).$ On the other hand, the {\em virtual dimension} of $\cM_k$ is the 
Euler characteristic
\[
	\vdim(\cM_k) = \dim \sH^1 - \pns{\dim \sH^0 +  \dim \sH^2}
\]
which gives the true dimension of $\cM_k$ if $\sH^0 = \sH^2 = \set{0}$---in
other words, if $\ssD_1$ is injective, meaning the gauge group acts freely at
$(A,\Phi)$, and $\ssD_2$ is surjective, so that $(A,\Phi)$ is a regular point
of $\cB.$ 

\subsection{Fredholm extension} \label{S:fred_extn}

We proceed to compute the virtual dimension by Hodge theoretic methods, as the
index of $\ssD_2 + \ssD_1'$ with respect to a suitable Fredholm extension. We
first define weighted $L^2$ spaces with respect to which \eqref{E:def_complex}
becomes a complex of unbounded operators on Hilbert spaces; for technical
reasons encountered below we need to consider different weights along different
directions in $\ad P$ at infinity. 

To this end, consider a collar neighborhood $U \cong \pa X \times[0,\epsilon)$ of
$\pa X$ in which $\Phi \neq 0$ and set
\begin{equation}
\begin{gathered}
	\ad P \rst_U = \ad P_0 \oplus \ad P_+\oplus \ad P_-,
	\\ \ad P_0 := \bbC \Phi, 
	\quad \ad P_1 = \ad P_+ \oplus \ad P_- := \Phi^\perp.
	\label{E:adP_decomp}
\end{gathered}
\end{equation}
Thus $\ad P_0$ is the kernel of $\ad\Phi$, which is nondegenerate on $\ad P_1$,
and the later further splits into positive/negative imaginary eigenspaces $\ad
P_{\pm}$ of $\ad \Phi.$ In fact, by simplicity of $\su(2)$, we may take $\Phi$
to be proportional to the Cartan element at each point, and then the orthogonal
decomposition \eqref{E:adP_decomp} coincides with the root space decomposition
$\su(2)_\bbC \cong \slfrak(2,\bbC) = \mathfrak{h}\oplus \mathfrak{g}_\alpha
\oplus \mathfrak{g}_{-\alpha}$.  For later reference, we record the
relationship between these bundles and the line bundle $L$ defining the charge
in \eqref{E:eigenbundle_L} in the following result, which follows easily by
decomposing into irreducible representations of $\su(2).$

\begin{lem}
Over $\pa X$, the complex line bundles $\ad P_+$ and $L\otimes L$ (respectively
$\ad P_-$ and $L^\ast \otimes L^\ast$) are isomorphic. Thus,
\[
	\ad P \rst_{\pa X} = \ad P_0 \oplus \ad P_+\oplus \ad P_- \cong \underline{\C}\oplus L^{2}\oplus L^{-2}
\]
where $\underline \C$ denotes the trivial bundle.
\label{L:monopole_bundle_isoms}
\end{lem}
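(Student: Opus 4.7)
The plan is to work entirely over $\pa X$ and reduce the identification to an elementary calculation in $\End_0(\bbC^2)$ decomposed according to the $\pm i$ eigenspace splitting of $\Phi\rst_{\pa X}$. All the geometric content is already encoded in the fact that $L$ is defined as the $+i$ eigenbundle of $\Phi\rst_{\pa X}$ acting on the standard-representation bundle $\pa X \times \bbC^2$.

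First I would complexify. Since $P$ is trivializable and $\SU(2)$ acts on $\bbC^2$ by the standard representation, the associated bundle is the trivial $\bbC^2$ over $X$, and $\ad P \otimes \bbC$ is canonically identified with $\End_0(\bbC^2)$, the bundle of traceless endomorphisms. Over $\pa X$, using $\bbC^2 = L \oplus L^{-1}$, we get the splitting
\begin{equation*}
	\End_0(L \oplus L^{-1}) = \End_0(L) \oplus \End_0(L^{-1}) \oplus \Hom(L^{-1},L) \oplus \Hom(L,L^{-1}),
\end{equation*}
where the first two summands together contribute a single trivial line bundle (the traceless diagonal part), while $\Hom(L^{-1},L) \cong L \otimes L = L^{\otimes 2}$ and $\Hom(L,L^{-1}) \cong (L^\ast)^{\otimes 2}$ using $L^{-1} \cong L^\ast$ for Hermitian line bundles.

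Next I would match these three summands with $\ad P_0$, $\ad P_+$, $\ad P_-$ by computing the action of $[\Phi,\cdot]$ on each. Since $\Phi\rst_{\pa X}$ acts as multiplication by $+i$ on $L$ and by $-i$ on $L^{-1}$, commutators are easily evaluated: any diagonal (with respect to the splitting) endomorphism commutes with $\Phi$ and so lies in $\ad P_0$; an element of $\Hom(L^{-1},L)$, moved once through $\Phi$ on either side, acquires $[\Phi,\cdot] = (+i) - (-i) = 2i$, so this summand is $\ad P_+$; and symmetrically $\Hom(L,L^{-1}) \subset \ad P_-$. Comparing with the assumed eigenspace decomposition \eqref{E:adP_splitting} yields $\ad P_+ \cong L^{\otimes 2}$ and $\ad P_- \cong (L^\ast)^{\otimes 2}$, along with $\ad P_0 \cong \underline{\bbC}$ (which was already noted as $\sspan_\bbC \Phi$).

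There is no real obstacle here; the only thing to be mildly careful about is the identification of $\ad P \otimes \bbC$ with $\End_0$ of the standard representation bundle (so that complexified $\su(2)$-eigenspaces of $\ad\Phi$ match the corresponding $\bbC$-linear blocks), and the observation that $L^{-1} \cong L^\ast$ as Hermitian line bundles so that $L \otimes L$ and $L \otimes (L^{-1})^\ast$ agree.
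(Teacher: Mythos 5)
Your proposal is correct, and it reaches the conclusion by a more hands-on route than the paper. The paper invokes the abstract $\slfrak(2,\bbC)$ representation isomorphism $\pi_3\oplus\pi_0\cong\pi_2\otimes\pi_2$ and matches highest-weight subbundles, whereas you identify $\ad P\otimes\bbC$ with the traceless endomorphisms of $L\oplus L^{-1}$ and compute the eigenvalues of $[\Phi,\cdot]$ blockwise, getting $0$ on the diagonal, $2i$ on $\operatorname{Hom}(L^{-1},L)\cong L^{\otimes 2}$, and $-2i$ on $\operatorname{Hom}(L,L^{-1})\cong (L^\ast)^{\otimes 2}$. These are really two presentations of the same isomorphism (via $\operatorname{End}(\bbC^2)\cong\bbC^2\otimes(\bbC^2)^\ast$ and the self-duality of the standard representation), but yours is more elementary and self-contained --- it avoids any appeal to Clebsch--Gordan or highest-weight theory and makes the eigenvalue $\pm 2i$ explicit, which is the cleanest way to see that $\operatorname{Hom}(L^{-1},L)$ is the \emph{positive} imaginary eigenbundle; the paper's version is slicker and would generalize to other gauge groups and representations. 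Your complexification step is consistent with the paper's (implicit) usage, since the splitting into $\pm i\lambda$ eigenbundles of the skew-adjoint operator $[\Phi,\cdot]$ only makes sense after complexifying, and the paper itself writes $\ad P_0=\sspan_\C\Phi$. One notational quibble: the displayed decomposition should not contain the summands $\operatorname{End}_0(L)$ and $\operatorname{End}_0(L^{-1})$, which vanish for line bundles; the diagonal contribution is the single trivial line of traceless diagonal endomorphisms $\{(a,-a)\}$, which is what your parenthetical remark correctly describes.
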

%

Let $\Pi_0$ denote the projection onto $\ad P_0$ over $U$ and $\chi \in
C^\infty_c(U;[0,1])$ a smooth cutoff with $\chi \equiv 1$ near $\pa X$. Then,
for $\alpha,\beta \in \bbR$, define the space $\cL^{\alpha,\beta}(X; \ad
P\otimes \Lambda^\ast)$ to be the completion of $C^\infty_c(\mathring X; \ad P\otimes
\Lambda^\ast)$ with respect to the norm
\[
\begin{gathered}
	\norm{u}^2_{\cL^{\alpha,\beta}} = \norm{x^{-\alpha} u_0}_{L^2}^2 
	+ \norm{x^{-\beta} u_1}_{L^2}^2
	+ \norm{u_c}_{L^2}^2,
	\\ u = u_0 + u_1 + u_c := \Pi_0 (\chi u) + (\id - \Pi_0)(\chi u) + (1 - \chi)u.
\end{gathered}
\]
In other words, near the boundary, 
\[
	\cL^{\alpha,\beta}(X; \ad P\otimes \Lambda^\ast) 
	  \simeq x^{\alpha} L^2(U; \ad P_0\otimes \Lambda^\ast)
	  \oplus x^{\beta} L^2(U; \ad P_1\otimes \Lambda^\ast), 
	\quad \text{over $U$.}
\]
These are Hilbert spaces, with inner product obtained by polarization.

Applying this to \eqref{E:def_complex}, we consider the family of 
unbounded elliptic complexes parameterized by $\gamma \in \bbR$:
\begin{multline}
	\cL^{\gamma-1,\gamma+1}(X; \ad P\otimes \Lambda^3) 
	\stackrel{\ssD_1}{\to} \cL^{\gamma,\gamma+1}\big(X; \ad P\otimes (\Lambda^1\oplus \Lambda^3)\big) 
	\\ \stackrel{\ssD_2}{\to} \cL^{\gamma+1,\gamma+1}(X; \ad P\otimes \Lambda^1).
	\label{E:L2_complex}
\end{multline}
These particular choices of weights are necessitated by the index theorem
applied below. To motivate the increase in weight along $\ad P_0$ at each step,
note that on $\ad P_0 = \bbC \Phi$ the term $\ad\Phi$ vanishes, so the
operators $\ssD_i$, $i = 1,2$ each have the form $\pm {\star d_A}$ or $d_A
\star$, from which a power of $x$ may be factored out. This is discussed in
more detail below.  

It remains to specify domains for $\ssD_1$ and $\ssD_2$ in
\eqref{E:L2_complex}. Following the analysis in \cite{kottke2013callias}, we
define Sobolev spaces $\sH^{\alpha,\beta,k,l}(X; \ad P\otimes \Lambda^*)$, where
$\alpha,\beta \in \bbR$, $k,l \in \bbN_0$, as the completions of $C^\infty_c(\mathring X;
\ad P\otimes \Lambda^*)$ with respect to the norms
\[
	\norm{u}_{\cH^{\alpha,\beta,k,l}}^2 =  \norm{x^{-\alpha} (x^{-1}\nabla)^k(\nabla)^l u_0}^2_{L^2} + 
	\norm{x^{-\beta} (\nabla)^{k+l} u_1}^2_{L^2} + \norm{(\nabla)^{k+l} u_c}^2_{L^2}.
\]
In particular, regularity is measured differently near $\pa X$ along $\ad P_0$
compared to $\ad P_1$, in that $k$ of the $k+l$ derivatives along the $\ad P_0$
are weighted by $x^{-1}$; on Euclidean space this corresponds to using the
radially weighted derivatives $r\pa_r$ and $\pa_\theta$ rather than $\pa_r$ and
$r^{-1}\pa_\theta$. 

We finally arrive at the object of primary consideration---the family of
complexes parameterized by $\gamma \in \bbR$, $k \in \bbN$:
\begin{multline}
	\cH^{\gamma-1,\gamma+1,k,2}(X; \ad P\otimes \Lambda^3) 
	\stackrel{\ssD_1}{\to} \cH^{\gamma,\gamma+1,k,1}\big(X; \ad P\otimes (\Lambda^1\oplus \Lambda^3)\big) 
	\\ \stackrel{\ssD_2}{\to} \cH^{\gamma+1,\gamma+1,k,0}(X; \ad P\otimes \Lambda^1).
	\label{E:Hilb_complex}
\end{multline}
Considered as domains in \eqref{E:L2_complex}, these determine {\em Hilbert
complexes}, in the sense of \cite{bruning1992hilbert}. Below we determine the
values of $\gamma$ for which \eqref{E:Hilb_complex} is Fredholm and compute its
index.

Before doing so however, two remarks are in order. First, note that the cutoff
for bounded sections to be in $x^\alpha L^2$ on a scattering 3-manifold is
$\alpha = -\tfrac{3}2$; more precisely, for $\alpha \geq -\tfrac{3}{2}$ any continuous sections
in $x^\alpha L^2$ must vanish at $\pa X$ while for $\alpha < -\tfrac{3}2$ they may be
nonzero up to $\pa X.$ It follows that for $\gamma \geq -\tfrac{1}2$ the leftmost
space in \eqref{E:L2_complex} is a weighted $L^2$ completion of the {\em
reduced} gauge Lie algebra $T_1 \cG_0$, while for $\gamma < -\tfrac 1 2$ it represents
a weighted $L^2$ version of the {\em full} gauge Lie algebra $T_1 \cG$.\footnote{The
extra vanishing along $\ad P_1$ is required here only for technical reasons.
With a judicious choice of gauge for $(A,\Phi)$, the weights along $\ad P_0$
and $\ad P_1$ can be considered independently (see \cite{kottkegluing}), and the index
computed below does not depend on the chosen weight along $\ad P_1$.} Thus,
denoting by $\sH^\ast(\gamma)$ the cohomology spaces of \eqref{E:Hilb_complex},
for $\gamma$ sufficiently near $-\tfrac{1}2$,
\begin{equation}
	\dim \sH^1(\gamma) - (\dim \sH^0(\gamma) + \dim \sH^2(\gamma)) 
	= \begin{cases} \vdim(\cM_k), & \gamma \geq -\tfrac 1 2
	\\ \vdim(\frM_k), & \gamma < -\tfrac 1 2. \end{cases}
	\label{E:vdim_parameter}
\end{equation}

The second remark concerns the behavior of adjoints in the complex
\eqref{E:L2_complex}.  As a notational convention, we denote by $\ssD_1' =
\ssD_1(\gamma)'$ the adjoint of $\ssD_1$ as an operator \eqref{E:L2_complex},
and denote by $\ssD_1^*$ its formal $L^2$ adjoint (with which is it more
convenient to work).  As a result of the weights, these are related via
\begin{equation}
\begin{gathered}
	\ssD_1(\gamma)' = \rho(\gamma) \ssD_1^\ast \rho(\gamma)^{-1}
	= \ssD_1^\ast + [\ssD_1^\ast,\rho(\gamma)^{-1}]
	: \cL^{\gamma,\gamma+1}
	\to \cL^{\gamma-1,\gamma+1},
	\\ \rho(\gamma) = \begin{pmatrix} x^{2\gamma} & 0\\0 & x^{2(\gamma+1)}\end{pmatrix}
	\quad \text{with respect to $\ad P = \ad P_0\oplus \ad P_1$ near $\pa X$.}
\end{gathered}
	\label{E:weighted_adjoint}
\end{equation}

According to the theory of Hilbert complexes, the complex
\eqref{E:Hilb_complex} is {\em Fredholm}, i.e.\ has finite dimensional
cohomology spaces, if and only if the operator 
\[
	\ssD_2(\gamma) + \ssD_1(\gamma)' : \cH^{\gamma,\gamma+1,k,1}\big(X; \ad P\otimes \Lambda^\odd)
	\to \cH^{\gamma+1,\gamma+1,k,0}\big(X; \ad P\otimes \Lambda^\odd)
\]
is Fredholm, and then the index of the operator equals the Euler characteristic
\eqref{E:vdim_parameter}. From \eqref{E:def_complex} and
\eqref{E:weighted_adjoint}, we may write
\begin{equation}
	\ssD_2(\gamma) + \ssD_1(\gamma)'
	= \star \tau (d_A + \delta_A) + [\ssD_1^\ast,\rho(\gamma)^{-1}] + \ad \Phi,
	\label{E:callias_op}
\end{equation}
where $\tau = -1$ on $\Lambda^0$ and $\tau = 1$ on $\Lambda^2$. The first term
is a twisting (by $\ad P$) of the self-adjoint Dirac operator $\star \tau (d +
\delta)$, which is known as the {\em odd signature operator} and was first
introduced in \cite{atiyah1975spectral}. The inclusion of the second term
$[\ssD_1^\ast,\rho(\gamma)^{-1}]$ (which has order 0) with the
first determine a {\em Dirac-type} operator modelled on the twisted odd
signature operator. Finally, the third term $\ad\Phi \in \Gamma(X; \End(\ad
P\otimes \Lambda^\odd))$ functions as a skew-adjoint potential term, with
constant rank nullspace bundle defined by $\ad P_0 = \bbC \Phi$ in a
neighborhood of $\pa X$. 

\section{Callias-type operators on scattering manifolds} \label{S:callias_review}

We briefly recall the index formula for operators of the form \eqref{E:callias_op}
proved in \cite{kottke2013callias}.  A general {\em Callias-type operator},
\begin{equation}
	P = D + \Psi \in \cB\scDiff^1(X; V),
	\label{E:callias_type}
\end{equation}
on $X$ consists of a Dirac-type operator $D \in \cB\scDiff^1(X; V)$ with
bounded polyhomogeneous coefficients which is modelled on a self-adjoint,
scattering Dirac operator, along with a skew-adjoint potential $\Psi \in
\Gamma(X; \End(V))$ which has a constant rank nullspace bundle $V_0 =
\Null(\Psi \rst_{\pa X}) \to \pa X$ at infinity.  Here $V \to X$ is a module
over the scattering Clifford algebra bundle $\Cl(X)$ whose fiber at $p \in X$
is the Clifford algebra $\Cl(\scT^\ast_p X, g(p))$, and a scattering Dirac
operator is defined to be the composite
\[
	\Gamma(X; V) \stackrel \nabla \to \Gamma(X; \scT^\ast X\otimes V) \stackrel {\cl}{\to} \Gamma(X; V)
\]
of a (Clifford compatible) scattering connection with the Clifford action of
$\scT^\ast X \subset \Cl(X)$ on $V$. A Dirac-type operator differs from this
by a 0th order term, assumed to have order $\cO(x)$ at $\pa X.$

It is assumed that the connection $\nabla$ is the lift of a `true' or `b-'
connection, meaning that $\nabla_v = x\nabla_{\wt v}$ for any vector field $v$
which is bounded with respect to the scattering metric, where $\wt v = x^{-1}
v$ is bounded with respect to the conformally related b-metric $\wt g = x^2 g$.
It follows that $D = x\wt D$ where $\wt D \in \cB\bDiff^1(X; V)$ is a
b-differential operator in the sense of Melrose \cite{melrose1993atiyah}. It is
further assumed that the connection and potential are compatible near infinity,
in the sense that $\nabla \Psi = \cO(x^{1 + \epsilon})$ for some $\epsilon >
0$. 

Under these assumptions, it is shown in \cite{kottke2013callias} that such an
operator \eqref{E:callias_type} admits bounded extensions 
\[
	P : \cH^{\gamma,\gamma+1,k,1}(X; V) \to \cH^{\gamma+1,\gamma+1,k,0}(X; V),
\]
where the Sobolev spaces are defined as in the previous section, with respect
to an extension of the splitting $V \rst_{\pa X} = V_0 \oplus V_1$, where $V_1
= V_0^\perp.$ It is convenient at this point to work with the parameter $\alpha
= \gamma + \tfrac 1 2$, which simplifies the formula
\eqref{E:defect_property_symm} below.

\begin{thm}[\cite{kottke2013callias}]
For $\alpha  = \gamma + \tfrac 1 2 \notin \bspec(\wt D_0)$, the extension 
\[
	P = D + \Psi : \cH^{\alpha-1/2, \alpha+1/2, k,1}(X; V) \to \cH^{\alpha+1/2, \alpha+1/2, k,0}(X; V)
\]
is Fredholm, with index (which is independent of $k$)
\begin{equation}
	\ind(P,\alpha) = \ind(\eth^+_+) + \defect(\wt D_0,\alpha) \in \bbZ.
	\label{E:index_formula}
\end{equation}
Here $\eth^+_+ \in \Diff^1(\pa X; V^+_+, V^-_+)$ is one half of the graded Dirac
operator induced by $D$ on $\pa X$, where $V_+ \subset V \rst_{\pa X}$ is the
positive imaginary eigenbundle of $\Psi \rst_{\pa X}$ and $V^+_+\oplus V^-_+$
denotes the further splitting into positive/negative eigenbundles of $i \cl(x^2\pa_x)$.
Additionally, $\wt D_0 = x^{-(n+1)/2} D_0 x^{(n-1)/2}$, where $n = \dim(X)$ and $D_0$
is a formal expansion at $\pa X$ of the $V_0$ restriction of $D$, and the {\em
defect index} $\defect(\wt D_0,\alpha)$ satisfies
\begin{equation}
	\defect(\wt D_0, \alpha_0 - \epsilon) - \defect(\wt D_0, \alpha_0 + \epsilon) = \dim F(\wt D_0, \alpha_0)
	\label{E:defect_property_diff}
\end{equation}
for $\alpha_0 \in \bspec(\wt D_0)$ and sufficiently small $\epsilon$, where
$F(\wt D_0, \alpha_0)$ is the formal nullspace of $\wt D_0$ at $\alpha_0 \in
\bspec(\wt D_0).$ If in addition $\wt D_0$ (or equivalently $D_0$) is
self-adjoint, then
\begin{equation}
	\defect(\wt D_0, - \alpha) = - \defect(\wt D_0, \alpha).
	\label{E:defect_property_symm}
\end{equation}
\label{T:index_theorem}
\end{thm}

The first term, $\ind(\eth_+^+)$ is well-known from the classical Callias index
theorem in which $\Psi \rst_{\pa X}$ is invertible, see \cite{anghel1993index},
\cite{rade1994callias}, \cite{bunke1995k} and \cite{kottke2011index}. The
second term, $\defect(\wt D_0,\alpha)$, comes from the b-calculus of Melrose
\cite{melrose1993atiyah}. We consider these now in more detail. 

\subsection{Dirac operators near the boundary} \label{S:dirac_near_pa}

Generally speaking, a scattering Dirac operator $D = \sum_{i=0}^{n-1} \cl(e_i)
\nabla_{e_i}$ (where $\set{e_i}$ is an orthonormal frame such that $e_0 = x^2\pa_x$
and $\nabla$ is the lift of a true or b- connection) decomposes near $\pa X$ as
\begin{equation}
	D = \cl(e_0)\bpns{\nabla_{e_0} + \textstyle \sum_{i=1}^{n-1} \cl(e_i\,e_0)\nabla_{e_i}}
	= x\, \cl(e_0)\bpns{\nabla_{\tilde e_0} + \underbrace{\textstyle\sum_{i=1}^{n-1} \cl(e_i\,e_0) \nabla_{\tilde e_i}}_{\eth(x)}}.
	\label{E:dirac_decomp}
\end{equation}
Here $\tilde e_i = x^{-1} e_i$ comprise an orthonormal frame on the {\em
b-tangent bundle} $\bT X$ (see \cite{melrose1993atiyah}) with respect to the
b-metric $\wt g = x^2 g = \tfrac{dx^2}{x^2} + h$; in particular $\set{\tilde
e_i}_{i=1}^{n-1}$ is an orthonormal frame on $\pa X$ with respect to the metric
$h.$ Over $\pa X$, the Clifford module $V$ decomposes as $V \rst_{\pa X} = V^+
\oplus V^-$ into $\pm 1$ eigenspaces for $i \cl(e_0)$, and
\[
\begin{gathered}
	\cl_{\pa} : \Cl(\pa X) \to \End_{\bbZ_2}(V^+\oplus V^-),
	\\ \cl_{\pa}(\tilde e_i) := \cl(e_i\,e_0), \quad 1 \leq i \leq n-1,
\end{gathered}
\]
defines a graded Clifford action of $\Cl(\pa X)$. (Here we use $\Cl(T \pa X, h)
\subset \Cl(\bT X; \wt g)$ along with the isomorphism $\Cl(\bT X, \wt g) \cong
\Cl(\scT X, g)$ defined by multiplication by $x^{-1}$; see
Proposition~\ref{P:sc_LC} below.) It follows that the {\em induced boundary
operator}
\begin{equation}
	\eth(0) = \begin{pmatrix} 0 & \eth^-\\\eth^+ & 0\end{pmatrix} = \sum_{i=1}^{n-1} \cl_{\pa}(\tilde e_i) \nabla_{\tilde e_i}
	\in \Diff^1(\pa X; V^+\oplus V^-)
	\label{E:boundary_dirac}
\end{equation}
is a graded Dirac operator on $\pa X$. (In the case that $D$ is a Dirac-type
operator, there will be additional lower order terms in \eqref{E:dirac_decomp},
though by assumption they are $\cO(x)$ so that $\eth$ is still well-defined as
a Dirac-type operator on $\pa X$.)

For a Callias-type operator, the compatibility condition $\nabla \Psi =
\cO(x^{1+\epsilon})$ implies that 
\[
	D = D_0\oplus D_+\oplus D_- + \cO(x^{1+ \epsilon}),
\]
with respect to an extension of the splitting $V \rst_{\pa X} = V_0 \oplus V_+
\oplus V_-$ into the nullspace and positive/negative imaginary eigenspaces of
$\Psi \rst_{\pa X}$. It follows that \eqref{E:dirac_decomp} and
\eqref{E:boundary_dirac} apply separately to $D_0$, $D_+$ and $D_-$, these being
the $\R_+ = (0,\infty)$ invariant operators on $\pa X \times \R_+$
obtained by freezing the coefficients of $D$ at the boundary and projecting to
$V_0$, $V_+$ or $V_-$, respectively.  

The conclusions of Theorem~\ref{T:index_theorem} refer in particular to the
induced operator $\eth^+_+ \in \Diff^1(\pa X; V^+_+,V^-_+)$ of $D_+$, and to
$\wt D_0 = x^{-(n+1)/2} D_0 x^{(n-1)/2}$, which should be understood as a
conjugation of $D_0$ by $x^{n/2}$ and a factoring out of $x^{1/2}$ from the
left and right. (In particular $D_0$ is formally self-adjoint with respect to
the metric $g$ on $X$ if and only if $\wt D_0$ is formally self-adjoint with
respect to $\wt g = x^2 g.$)

Explicitly, if we take $V$ in radial gauge with respect to $\nabla$, so that
$\nabla_{\tilde e_0} \equiv x\pa_x$, we may write \eqref{E:dirac_decomp} in
local coordinates $(x,y_1,\ldots,y_{n-1})$ as
\[
\begin{aligned}
	D &= x\,a(x,y)\big(x\pa_x + \textstyle \sum_{i=1}^n b_i(x,y)\pa_{y_i} + c(x,y)\big),
	\\ D_0 &= \Pi_0\, x\,a(0,y)\big(x\pa_x + \textstyle \sum_{i=1}^n b_i(0,y)\pa_{y_i} + c(0,y)\big)\Pi_0,
	\\ \wt D_0 &= \Pi_0\, a(0,y)\big(x\pa_x + \tfrac{n-1}{2} + \textstyle \sum_{i=1}^n b_i(0,y)\pa_{y_i} + c(0,y)\big)\Pi_0,
	\\ &= \Pi_0\, \cl(e_0)\big(x\pa_x + \tfrac{n-1}{2} + \eth \big)\Pi_0.
\end{aligned}
\]
(Note that only $x\pa_x$ fails to commute with $x^{(n-1)/2}$, and
$[x\pa_x,x^{(n-1)/2}] = \tfrac{n-1}{2}$.) The discrete set of {\em indicial roots},
$\bspec(\wt D_0) \subset \bbR$, consists of those $\alpha \in \bbR$ for which
the Mellin transformed operator
\[
	I(\wt D_0,\alpha) = \Pi_0\, \cl(e_0)\big(\alpha + \tfrac{n-1}{2} + \eth\big)\Pi_0,
\]
is not invertible, and then $F(\wt D_0,\alpha_0) \subset C^\infty(\pa X; V_0)$
is the (necessarily finite-dimensional) nullspace of $I(\wt D_0, \alpha_0).$ In
fact, the defect index is just the formal index of $\wt D_0$, and the
properties \eqref{E:defect_property_diff} and \eqref{E:defect_property_symm}
follow from the {\em relative index theorem} in \cite{melrose1993atiyah}.

\section{Index of the deformation complex} \label{S:comp}

We return now to the consideration of \eqref{E:callias_op}, first verifying
that it satisfies the necessary conditions to apply
Theorem~\ref{T:index_theorem}. Here $V = \ad P \otimes \Lambda^\odd$, and the
connection defining the Dirac operator is $\nabla = d_A \otimes
\nabla^{\LC(g)}$.  Since $A$ is a true connection by assumption, the fact that
$\nabla$ is the lift of a b-connection follows from the next result, which is
of independent interest. 

\begin{prop}
The Levi-Civita connection on a scattering manifold of dimension $n$ with
metric $g = \tfrac{dx^2}{x^4} + \tfrac{h}{x^2}$ is a lift of a b-connection. In
fact, multiplication by $x^{-1}$ induces an isomorphism of $\scT X$ and the
b-tangent bundle $\bT X$ and of their associated principal frame bundles,
identifying $g$ with $\wt g = x^2 g = \tfrac{dx^2}{x^2} + h$.
In terms of this isomorphism,
\begin{equation}
	\nabla^{\LC(g)} \cong \nabla^{\LC(\wt g)} + B,
	\quad B = \sum_{i=1}^{n-1}E_{0i} \tilde e_i',
	\label{E:LC_connections}
\end{equation}
where $\set{\tilde e'_i} \subset \bT^\ast X$ is the dual to an orthonormal frame $\set{\tilde
e_0 = x\pa_x,\tilde e_1,\ldots,\tilde e_{n-1}}$ for $\bT X$ and $E_{0i} \in \so(n)$ acts by
$E_{0i}\tilde e_i = \tilde e_0$, $E_{0i} \tilde e_0 = - \tilde e_i$ and is $0$ otherwise.
\label{P:sc_LC}
\end{prop}
\noindent The meaning of \eqref{E:LC_connections} is that if $v$ is a
scattering vector field, equal to $x\,\wt v$ for a b-vector field $\wt v$, then
$\nabla^{\LC(g)}_v = x\big(\nabla^{\LC(\wt g)}_{\wt v} + B(\wt v)\big)$.
\begin{proof}
Let $\set{e_0 = x^2\pa_x, e_1 = x\tilde e_1,\ldots, e_{n-1} = x\tilde e_{n-1}}$ be
the orthonormal frame for $\scT X$ which is identified with $\set{\tilde e_i}$ by
the isomorphism. The Koszul formula for $g$ along with the fact that
$[e_0,e_j] = xe_j$, $j \geq 1$, implies
\[
	\nabla^{\LC(g)}_{e_j}e_0 = -x e_j, 
	\quad \nabla^{\LC(g)}_{e_j}e_k = x(e_0\delta_{jk} + \nabla^{\LC(h)}_{\tilde e_j}\tilde e_k)
\]
for $j,k \geq 1.$ On the other hand, from the Koszul formula for $\wt g$ it follows that
\[
	\nabla^{\LC(\wt g)}_{\tilde e_j} \tilde e_0 = 0,
	\quad \nabla^{\LC(\wt g)}_{\tilde e_j} \tilde e_k = \nabla^{\LC(h)}_{\tilde e_j} \tilde e_k.
\]
Comparing these formulas leads immediately to \eqref{E:LC_connections}.
\end{proof}

In \eqref{E:callias_op} $\ad \Phi$ plays the role of the potential term $\Psi$,
and the nullspace bundle is simply $V_0 = \ad P_0 \otimes \Lambda^\odd \cong
\Lambda^\odd$. The compatibility of the connection and the potential follows
from finiteness of the action \eqref{E:action}:
\[
	\nabla \Psi = d_A \Phi \in L^2(X; \ad P \otimes \Lambda^1)
	\implies d_A \Phi = \cO(x^{3/2+\epsilon}).
\]

The Clifford action is best understood as follows. First, we make use of
the vector bundle isomorphism $\Lambda^\ast X \cong \Cl(X)$ to simplify
computations. This isomorphism intertwines the Hodge star with the {\em
normalized Clifford volume element} $\omega_\C \in \Cl(X)$ up to a sign:
\begin{equation}
\begin{gathered}
	\End(\Lambda^\ast X) \ni \star \tau \cong \omega_\C \in \Cl(X)
	\\ \tau := i^{\left[\frac{n+1}{2}\right] + k(k-1) + 2nk}\ \text{on $\Lambda^k$}, 
	\quad \omega_\C := i^{\brackets{\frac{n+1}{2}}} e_0\cdots e_{n-1},
	\quad n = \dim(X).
\end{gathered}
	\label{E:clifford_hodge}
\end{equation}
Here $\set{e_i}$ is any orthonormal frame, and $\tau$ is the general version of
the sign operator appearing in \eqref{E:callias_op}. Note that in the case $n =
2l$ is even, $\tau = i^{k(k-1) + l}$ and the $\pm 1$ eigenspaces of $\omega_\C
= \star \tau$ define the {\em signature splitting} $\Lambda^\ast X = \Lambda^+X
\oplus \Lambda^- X$. 

On an odd-dimensional manifold, the odd signature operator is the Dirac
operator on odd forms associated to the Levi-Civita connection and the {\em odd
Clifford action}:
\[
\begin{gathered}
	\star \tau (d + \delta) = \sum_i \cl_\odd(e_i) \nabla^{\LC}_{e_i}  \in \Diff^1(X; \Lambda^\odd),
	\\ \cl_\odd : \Cl(X) \to \End(\Lambda^\odd) \cong \End(\Cl^1(X)),
	\quad \cl_\odd(e) := \omega_\C e \cdot
\end{gathered}
\]
The first term in \eqref{E:callias_op} is the twisting of this operator by $\ad
P$ via the connection $A$.

Finally, note that the term $[\ssD_1^\ast,\rho(\gamma)^{-1}]$ in
\eqref{E:callias_op} only involves commutators of $x^2\pa_x$ with powers
$x^{-2\gamma}$ and $x^{-2\gamma - 1}$, and these commutators have order
$\cO(x)$ near $\pa X$. The observations of this section together prove:
 
\begin{prop}
The operator \eqref{E:callias_op} is a Callias-type operator in the sense
of \cite{kottke2013callias}.
\label{P:callias_is_callias}
\end{prop}

\subsection{Induced operators and indicial roots} \label{S:induced_and_indicial}

It remains to determine the induced operator $\eth_+^+$ as well as $\wt D_0$ and
its indicial roots. To apply the considerations of \S\ref{S:dirac_near_pa} to
the operator \eqref{E:callias_op}, we first identify $\Lambda^\odd X$ with
$\Lambda^\ast \pa X$ near $\pa X$ via
\begin{equation}
	\Lambda^\odd X \cong \Cl^1(X) \ni \left\{ \begin{array}{rcrc}
	e_0 e_I &\leftrightarrow  &-\tilde e_I \in \Cl^0(\pa X) \cong \Lambda^\even\pa X, 
		& \abs I \ \text{even,} \\
	e_J &\leftrightarrow &\tilde e_J \in \Cl^1(\pa X) \cong \Lambda^\odd\pa X, 
		& \abs J\ \text{odd,} \end{array}\right.
	\label{E:odd_to_all}
\end{equation}
where $I$ and $J$ are multi-indices: $e_I = e_{i_1}\cdots e_{i_m}$ and $\abs I =
m.$
\begin{lem}
Under the identification \eqref{E:odd_to_all}, $\cl_\odd(e_0) \cong -i (\star
\tau)_{\pa X}$; in particular $i \cl(e_0)$ generates the signature splitting 
\[
	\Lambda^\ast \pa X = \Lambda^+\pa X\oplus \Lambda^-\pa X.
\]
The induced Clifford action $\cl_\pa : \Cl(\pa X) \to \End_{\bbZ_2}(\Lambda^+\pa X\oplus
\Lambda^-\pa X)$ associated to $\cl_\odd$ is the standard Clifford action on forms.
\label{L:induced_clifford}
\end{lem}
\begin{proof}
The Clifford volume element defined in \eqref{E:clifford_hodge} may be
expressed as $\omega_\C = ie_0 \omega'_\C,$ where $\omega'_\C$ is the volume
element for $\Cl(\pa X)$. Thus
\[
	i \cl_\odd(e_0) = i \omega_C e_0 = - e_0\omega'_\C e_0 = \omega'_\C \cong (\star \tau)_{\pa X}
\]
which generates the signature splitting on the even dimensional manifold $\pa
X$ as remarked above. Likewise, recalling that $\omega_\C$ is an involution
which is central in odd dimensions, so that $\cl_\odd(e_je_0) =
\omega_\C e_j \omega_\C e_0 = e_j e_0$, the induced action is given by
\[
\begin{aligned}
	\cl_\pa(\tilde e_j) \tilde e_I &\cong (e_j e_0)( - e_0 e_I) = e_j e_I \cong \tilde e_j \tilde e_I, \\
	\cl_\pa(\tilde e_j) \tilde e_J &\cong (e_j e_0) e_J = -e_0 e_j e_J \cong \tilde e_j \tilde e_J,
\end{aligned}
\]
for $\abs I$ even and $\abs J$ odd.
\end{proof}

It is convenient to take $\nabla = d_A \otimes \nabla^{\LC(g)}$ to be in radial
gauge, so that $\nabla_{x^2\pa_x} = x^2\pa_x.$ The condition $d_A \Phi
\rst_{\pa X} = 0$ implies that $A$ restricts separately to a connection on each
of the summands $\ad P_0$, $\ad P_+$ and $\ad P_-$ over $\pa X$, and
Proposition~\ref{P:sc_LC} implies that $\nabla^{\LC(g)}$ restricts to the
connection $\nabla^{\LC(h)} + B$ on forms over $\pa X$. 

In light of Lemma~\ref{L:induced_clifford}, it follows that induced Dirac
operators $\eth_\pm$ coincide, modulo lower order terms, with the (even)
signature operator $d + \delta$ on $\pa X$, twisted by $\ad P_\pm$. Since only
the index of $\eth^+_+$ appears in Theorem~\ref{T:index_theorem}, the lower
order terms may be ignored, and invoking Lemma~\ref{L:monopole_bundle_isoms} we
therefore have:

\begin{prop}
For the operator \eqref{E:callias_op}, the induced operator $\eth_+^+$ is
homotopic to the twisted signature operator
\[
	\eth_+^+ \sim (d_A + \delta_A)^+  \in \Diff^1(\pa X; \Lambda^+\pa X\otimes L^2, \Lambda^-\pa X\otimes L^2),
\]
where $L \to \pa X$ is the line bundle of degree $k$ defining the charge,
equipped with the connection induced by $A$.
\label{P:boundary_dirac}
\end{prop}

When considering $\wt D_0$, the lower order terms are of critical importance,
as they affect the locations of the indicial roots.

\begin{prop}
For the operator \eqref{E:callias_op}, the operator $\wt D_0$ is given by
\begin{equation}
	\wt D_0 = -i (\star \tau)_{\pa X} (x\pa_x + (d + \delta)_{\pa X} + N) \in \Diff^1(\pa X\times \R_+; \Lambda^\ast \pa X),
	\label{E:D_0}
\end{equation}
where $N = -1-2\gamma$ on $\Lambda^0\pa X$, $N = 0$ on $\Lambda^1\pa X$, and $N = 1$ on $\Lambda^2\pa X$.
\label{P:D_0}
\end{prop}
\begin{proof}
The bundle $\ad P_0 \to \pa X$ is explicitly trivialized by $\Phi$, and it
follows from the discussion in \S\ref{S:framing} that the induced connection on
it is not only flat, but in fact trivial. Thus the twisting by $\ad P_0$ may be
disregarded completely. Then following the discussion in
\S\ref{S:dirac_near_pa} and using Proposition~\ref{P:sc_LC}, 
\[
	D_0 = x\cl(e_0)\big(x\pa_x + \textstyle\sum_{i \geq 1}\cl_\pa(\tilde e_i) (\nabla^{\LC(h)}_{\tilde e_i} + B(\tilde e_i))\big) + [\ssD_1^\ast,\rho(\gamma)^{-1}].
\]
As already remarked, $\cl(e_0) = -i(\star \tau)_{\pa X}$ and $\sum_{i\geq 1}
\cl_\pa(\tilde e_i) \nabla^{\LC(h)}_{\tilde e_i} = (d + \delta)_{\pa X}$, so it
remains to determine the contribution from the last two terms.

The first of these is $\cl_\pa(\tilde e_i) B(\tilde e_i) = \cl_\pa(\tilde e_i) E_{0i}$.
The endomorphism $E_{0i}$ of $\scT X$ in \eqref{E:LC_connections} is
represented by the same matrix in the contragredient representation (i.e.\ on
$\scT^\ast X$) by skew-adjointness, and acts on $\Lambda^\ast X \cong \Cl(X)$
as an (ungraded) derivation. Thus 
\[
	E_{0i} e_J = e_{J(i,0)}, \quad E_{0i} e_0 e_I = -e_i e_I + e_0 e_{I(i,0)}
\]
where $e_{J(i,0)}$ is the element obtained by replacing $e_i$ by $e_0$ in $e_J$
if it occurs and which is 0 otherwise. Then
\[
\begin{gathered}
	\cl_\pa(\tilde e_i) E_{0i} \tilde e_J  \cong e_i e_0 E_{0i} e_J = e_i e_0 e_{J(i,0)}
	\cong \begin{cases} -\tilde e_J & i \in J,
	\\ 0 & i \notin J, \end{cases}
	\\ \cl_\pa(\tilde e_i) E_{0i} \tilde e_I \cong e_i e_0 E_{0i} (-e_0 e_I) = e_i e_0(e_i e_I - e_0 e_{I(i,0)})
	\cong \begin{cases} 0 & i \in I,
	\\ -e_I & i \notin I. \end{cases}
\end{gathered}
\]
Thus $\sum_i\cl_\pa(\tilde e_i) B(\tilde e_i)$ acts by $-k$ on $\Lambda^k\pa X$
for $k$ odd, and by $-(m - k)$ for $k$ even, where $m = \dim(\pa X) = 2.$

The final term to consider is $[\ssD_1^\ast,\rho(\gamma)^{-1}].$ Since we only
consider the part of the operator acting on $\ad P_0$, we can replace
$\rho(\gamma)^{-1}$ by $x^{-2\gamma}$, and as noted above ignore the twisting
and consider only the action on forms. From \eqref{E:def_complex}, we see that
$\ssD_1^\ast$ has order 0 on $\Lambda^3 X$, so this will not contribute to
the commutator.  Thus we may restrict attention to the part of $\ssD_1^\ast =
\star \tau \delta = \sum_i\cl_\odd(e_i) \nabla^{\LC(g)}_{e_i}$ mapping sections
of $\Lambda^1 X$ to sections of $\Lambda^3 X$. 

Only the $\nabla^{\LC(g)}_{e_0} = e_0 = x^2\pa_x$ term will contribute to the
commutator (since $e_j$, $j \neq 0$ can be chosen to commute with $x$), and the
only 1-forms mapped by $\cl_\odd(e_0) = \omega_\C e_0$ into $\Lambda^3X$ are
those proportional to $e_0$; indeed $\cl_\odd(e_0)$ sends $e_1$ and $e_2$ into
$\Lambda^1 X.$ Thus since $[x^2\pa_x,x^{-2\gamma}] = x(-2\gamma)$, it follows that
\[
	[\ssD_1^\ast,x^{-2\gamma}] = x\cl_\odd(e_0)(-2\gamma)\rst_{\sspan(e_0)}.
\]
Since $\sspan(e_0) \subset \Lambda^1 X$ is identified with $\Lambda^0 \pa X$ by
the isomorphism \eqref{E:odd_to_all}, the net effect of
$[\ssD_1,\rho(\gamma)^{-1}]$ is multiplication by $-2\gamma$ on $\Lambda^0\pa
X$. Thus
\[
	D_0 = x\cl(e_0)\big(x\pa_x + (d + \delta)_{\pa X} + M), \quad M 
	= \begin{cases} -2 - 2\gamma & \text{on $\Lambda^0 \pa X$,}
	\\ -1 &\text{on $\Lambda^1 \pa X$,}
	\\ 0 &\text{on $\Lambda^2 \pa X$.} \end{cases}
\]
Taking $\wt D_0 = x^{-(n+1)/2} D_0 x^{(n-1)/2}$ has the effect of removing the
overall factor of $x$ and adding $\tfrac{n-1}{2} = 1$ to all terms, so \eqref{E:D_0}
follows.
\end{proof}

\begin{prop}
The indicial roots of $\wt D_0$ are
\begin{equation}
\begin{gathered}
	\bspec(\wt D_0) =
	\set{-\tfrac 1 2 \pm \sqrt{\nu + \tfrac 1 4}}
	\cup \set{\tfrac{1 + 2\gamma} 2 \pm \sqrt{\nu + \tfrac {(1 + 2\gamma)^2} 4}}, \quad \nu \in \spec(\Delta_{\pa X}).
\end{gathered}
	\label{E:ind_roots}
\end{equation}
The formal nullspaces associated to the roots $\set{-1,0,1 + 2\gamma}$
(for whic $\nu = 0$) are the harmonic forms of degree $2$, $1$, and $0$
respectively:
\[
	F(\wt D_0,-1) \cong H^2(\pa X; \bbR), 
	\quad F(\wt D_0,0) \cong H^1(\pa X; \bbR), 
	\quad F(\wt D_0,1 + 2\gamma) \cong H^0(\pa X; \bbR).
\]
\label{P:ind_roots}
\end{prop}
\noindent Technically speaking, we should distinguish between the contributions
to $\bspec(\wt D_0)$ coming from eigenvalues of $\Delta_{\pa X}$ acting on
$\Lambda^k \pa X$ for various $k$; however since $\dim(\pa X) = 2$, the spectrum of
$\Delta_{\pa X}$ is the same on forms of any degree.
\begin{proof}
The term $\cl(e_0) = -i (\star \tau)_{\pa X}$ in \eqref{E:D_0} is a bundle
isomorphism and may be ignored. Taking the Mellin transform replaces $x\pa_x$
by $\lambda$; therefore we consider the invertibility of 
\begin{equation}
	\begin{pmatrix} \lambda -1 - 2\gamma & \delta & 0
	\\ d & \lambda & \delta
	\\ 0 & d & \lambda + 1 \end{pmatrix}
	\label{E:ind_root_op}
\end{equation}
on $\pa X$, with respect to $\Lambda^0\pa X\oplus \Lambda^1\pa X\oplus \Lambda^2\pa X.$ On
the harmonic forms, this is degenerate for $\lambda \in \set{-1, 0, 1 +
2\gamma}$ with nullspace consisting of harmonic forms of the associated degree,
giving $F(\wt D_0,\lambda)$ as claimed.

Off of the harmonic forms, we use the fact that the only coupling is between
closed and coclosed forms of relative degree 1. Thus it suffices to consider
invertibility on pairs $(\phi_\nu,\psi_\nu) \in C^\infty(\pa X:
\Lambda^k)\oplus C^\infty(\pa X; \Lambda^k)$ such that $d\phi_\nu = \sqrt\nu
\psi_\nu$ and $\delta \psi_\nu = \sqrt \nu \phi_\nu$ for $k = 0$ or $k = 1$, on
which \eqref{E:ind_root_op} takes the form
\[
	\begin{pmatrix} \lambda - 1 - 2\gamma & \sqrt \nu
	\\ \sqrt \nu & \lambda\end{pmatrix} 
	\quad \text{or}\quad
	\begin{pmatrix} \lambda & \sqrt \nu
	\\ \sqrt \nu& \lambda + 1\end{pmatrix},
\]
respectively. These give the right and left hand contributions to
\eqref{E:ind_roots} for $\nu > 0.$
\end{proof}

\subsection{The virtual dimension} \label{S:comp_index} 

It is convenient to divide the indicial roots \eqref{E:ind_roots} into the
`geometric' roots, with $\nu > 0$, and the `topological' roots
$\set{-1,0,1 + 2\gamma}$ for which $\nu = 0.$ The former are sensitive to the
metric $h$ on $\pa X$ and in particular may be scaled away from $0$ by altering
$g$.  On the other hand, the topological roots are independent of the metric. (This division
of indicial roots is well-known; see for instance \cite{albin2012signature}.)
These sets may be further subdivided into `variable' roots, which depend on
$\gamma$, and and `static' roots, which do not. These are depicted in
Figure~\ref{F:bspec}, with static roots represented by solid dots, variable
ones by hollow dots, and with the topological roots drawn larger than the
geometric ones; the parameter $\alpha = \gamma + \tfrac 1 2$ appearing in
Theorem~\ref{T:index_theorem} is also plotted. The static geometric roots are
symmetric about $-\tfrac{1}{2}$, and always bounded away from it by at least $\tfrac 1 2$. The
variable geometric roots are symmetric about $\alpha$.
Consider the following regimes:
\begin{itemize}
\item ($\gamma = 0$): $\alpha = \tfrac 1 2$ and the b-spectrum is symmetric since here $\wt D_0$ 
is formally self-adjoint.
\item ($-\tfrac 1 2 < \gamma < 0$): $\alpha$ lies above the static topological root $0$ and below the lone 
variable topological root $1 + 2\gamma$. There may also be static geometric
roots in this range, but for $\gamma$ sufficiently close to $- \tfrac 1 2$ there are no
roots between $\alpha$ and $0$.
\item ($\gamma = -\tfrac 1 2$): $\alpha$, the variable topological root, and the static topological root at $0$ coincide.
\item ($\gamma < -\tfrac 1 2$): $\alpha$ lies above the variable topological root $1
+ 2\gamma$ and below the static root $0$. For $\gamma$ sufficiently close to
$-\tfrac 1 2$, there are no geometric roots (either static or variable) between
$\alpha$ and $0$.
\end{itemize} 

\begin{figure}[tb]
\begin{center}
\begin{tikzpicture}
\matrix (m) [matrix of nodes, row sep=1cm]
{
\begin{tikzpicture}[scale=2]
	\draw[<->,thick] (-2.3,0) -- (2.3,0);
	\draw (0,0.10) -- (0,-0.10) node [below] {$0$};
	\draw (1,0.10) -- (1,-0.10) node [below] {$1$};
	\draw (-1,0.10) -- (-1,-0.10) node [below] {$-1$};
	\draw (2,0.10) -- (2,-0.10) node [below] {$2$};
	\draw (-2,0.10) -- (-2,-0.10) node [below] {$-2$};

	\fill (0,0) circle (1.5pt);
	\fill (-1,0) circle (1.5pt);

	\fill (0.30,0) circle (1.0pt);
	\fill (0.77,0) circle (1.0pt);
	\fill (1.5,0) circle (1.0pt);
	\fill (-1.35,0) circle (1.0pt);
	\fill (-1.77,0) circle (1.0pt);

	\draw[thick] (1,0) circle (1.5pt);

	\draw (-0.30,0) circle (1.0pt);
	\draw (-0.77,0) circle (1.0pt);
	\draw (-1.5,0) circle (1.0pt);
	\draw (1.35,0) circle (1.0pt);
	\draw (1.77,0) circle (1.0pt);

	\draw[thick] (0.5,0.15) node[above] {$\alpha$} -- (0.5,-0.15); 

	\path (0,0.10) node[above] {$b^1(\pa X)$};
	\path (-1,0.10) node[above] {$b^2(\pa X)$};
	\path (1,0.10) node[above] {$b^0(\pa X)$};

	\path (-2.5,0.2) node {a)};
\end{tikzpicture} 
\\
\begin{tikzpicture}[scale=2]
	\draw[<->,thick] (-2.3,0) -- (2.3,0);
	\draw (0,0.10) -- (0,-0.10) node [below] {$0$};
	\draw (1,0.10) -- (1,-0.10) node [below] {$1$};
	\draw (-1,0.10) -- (-1,-0.10) node [below] {$-1$};
	\draw (2,0.10) -- (2,-0.10) node [below] {$2$};
	\draw (-2,0.10) -- (-2,-0.10) node [below] {$-2$};

	\fill (0,0) circle (1.5pt);
	\fill (-1,0) circle (1.5pt);

	\fill (0.30,0) circle (1.0pt);
	\fill (0.77,0) circle (1.0pt);
	\fill (1.5,0) circle (1.0pt);
	\fill (-1.35,0) circle (1.0pt);
	\fill (-1.77,0) circle (1.0pt);

	\draw[thick] (0.4,0) circle (1.5pt);

	\draw (-0.15,0) circle (1.0pt);
	\draw (-0.47,0) circle (1.0pt);
	\draw (-0.8,0) circle (1.0pt);
	\draw (-1.5,0) circle (1.0pt);
	\draw (0.55,0) circle (1.0pt);
	\draw (0.87,0) circle (1.0pt);
	\draw (1.2,0) circle (1.0pt);
	\draw (1.9,0) circle (1.0pt);

	\draw[thick] (0.2,0.15) node[above] {$\alpha$} -- (0.2,-0.15); 


	\path (-2.5,0.2) node {b)};
\end{tikzpicture} 
\\
\begin{tikzpicture}[scale=2]
	\draw[<->,thick] (-2.3,0) -- (2.3,0);
	\draw (0,0.10) -- (0,-0.10) node [below] {$0$};
	\draw (1,0.10) -- (1,-0.10) node [below] {$1$};
	\draw (-1,0.10) -- (-1,-0.10) node [below] {$-1$};
	\draw (2,0.10) -- (2,-0.10) node [below] {$2$};
	\draw (-2,0.10) -- (-2,-0.10) node [below] {$-2$};

	\fill (0,0) circle (1.5pt);
	\fill (-1,0) circle (1.5pt);

	\fill (0.30,0) circle (1.0pt);
	\fill (0.77,0) circle (1.0pt);
	\fill (1.5,0) circle (1.0pt);
	\fill (-1.35,0) circle (1.0pt);
	\fill (-1.77,0) circle (1.0pt);

	\draw[thick] (-0.8,0) circle (1.5pt);

	\draw (-1.20,0) circle (1.0pt);
	\draw (-1.67,0) circle (1.0pt);
	\draw (0.4,0) circle (1.0pt);
	\draw (0.87,0) circle (1.0pt);
	\draw (1.6,0) circle (1.0pt);

	\draw[thick] (-0.4,0.15) node[above] {$\alpha$} -- (-0.4,-0.15); 


	\path (-2.5,0.2) node {c)};
\end{tikzpicture}
\\ };
\end{tikzpicture} \caption{The b-spectrum of $\wt D_0$. Static roots are solid, variable roots are hollow, and topological
roots are depicted as larger than geometric roots. (a) $\gamma = 0
\iff \alpha = \tfrac 1 2$. (b) $\gamma \in (-\tfrac 1 2,-\tfrac 1 2 + \lambda_1) \iff \alpha \in
(0,\lambda_1)$. (c) $\gamma \in (-\tfrac 3 2,-\tfrac 1 2) \iff \alpha \in (-1,0).$}
\label{F:bspec}
\end{center}
\end{figure}
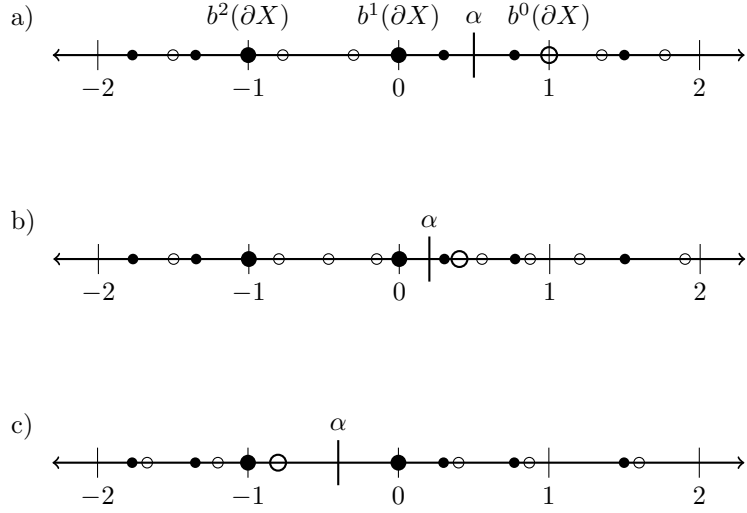

\begin{thm}
The monopole deformation complex \eqref{E:Hilb_complex} is Fredholm for $\gamma
\in (- \tfrac 1 2,- \tfrac 1 2 + \lambda_1)$ and for $\gamma \in (- \tfrac 3 2,- \tfrac 1 2),$ where
$\lambda_1 = \sqrt{\nu_1 + \tfrac 1 4} - \tfrac 1 2$ and $\nu_1$ is the
smallest positive eigenvalue of $\Delta_{\pa X}$. The index, and therefore
virtual dimension, is given by
\[
\begin{aligned}
	\vdim(\frM_k) &= \ind\big(\ssD_2(\gamma) + \ssD_1'(\gamma)\big) = 4\ul k - \tfrac 1 2 b^1(\pa X), 
		& \gamma &\in (-\tfrac 1 2,-\tfrac 1 2+\lambda_1),
	\\ \vdim(\cM_k) &= \ind\big(\ssD_2(\gamma) + \ssD_1'(\gamma)\big) = 4\ul k + \tfrac 1 2 b^1(\pa X) - b^0(\pa X), 
		& \gamma &\in (-\tfrac 3 2,-\tfrac 1 2),
\end{aligned}
\]
where $\ul k = \int_{\pa X}c_1(L) = k_1 + \cdots + k_{b^0(\pa X)}$ is a sum
over components of $\pa X$, and $b^i(\pa X)$ denotes the $i$th Betti number of
$\pa X.$ 
\label{T:vdim}
\end{thm}
\begin{proof}
Combining Proposition~\ref{P:boundary_dirac} with the standard index formula
\cite{lawson1989spin}, Thm.\ 13.9, 
\[
	\ind(\eth^+_+) = \ind\bpns{(d + \delta)^+_{L^2}} = \int_{\pa X} \mathrm{ch}_2(L^2) \mathbf{\hat L}(\pa X)
	= \int_{\pa X} 4c_1(L) = 4\ul k.
\]
Here $\mathrm{ch}_2(E) = \sum_k 2^k \mathrm{ch}^k(E)$ and $\mathrm{ch}^k(E)$
denotes the $H^{2k}(\pa X; \R)$ component of the Chern character
$\mathrm{ch}(E).$ 

The term $\defect(\wt D,\alpha)$ may be computed using
\eqref{E:defect_property_diff} and \eqref{E:defect_property_symm}, though the
second of these identities is only valid when $\wt D_0$ is self-adjoint, which
occurs here exactly when $\gamma = 0$. For this value then, $\alpha = \tfrac 1 2$ and 
\[
	\defect(\wt D_0, \tfrac 1 2) = - \tfrac 1 2 b^1(\pa X) - \textstyle\sum_j F_j
\]
where the sum is over the dimensions $F_j = \dim F(\wt D_0,\lambda_j)$ of the
finitely many (static) geometric indicial roots such that $0 < \lambda_j < \tfrac 1 2$
(see Figure~\ref{F:bspec}.(a)).

As $\gamma$ varies from $0$ toward $-\tfrac 1 2$, $\alpha$ varies from $\tfrac 1 2$ toward
$0$, and each time $\alpha$ passes over a (necessarily static geometric) root
$\lambda_j$, the defect index increases by $F_j$ by
\eqref{E:defect_property_diff}. Once $0 < \alpha < \lambda_1$, where
$\lambda_1 = -\tfrac 1 2 + \sqrt{\nu_1 + \tfrac 1 4}$ is the smallest positive
root, we obtain
\[
	\defect(\wt D_0, \alpha) = - \tfrac 1 2 b^1(\pa X), \quad 0 < \alpha < \lambda_1.
\]
(See Figure~\ref{F:bspec}.(b).) This corresponds precisely to the range $\gamma
\in (-\tfrac 1 2, - \tfrac 1 2 + \lambda_1)$, as claimed.

Finally, as $\gamma$ passes through $-\tfrac 1 2$ from above, $\alpha$ passes over the
static topological root $0$ from above, while at the same time passing over the
variable topological root $1 + 2\gamma$ from below (see
Figure~\ref{F:bspec}.(c)). After this transition, it follows from
\eqref{E:defect_property_diff} that
\[
	\defect(\wt D_0,\alpha) = \tfrac 1 2 b^1(\pa X) - b^0(\pa X), \quad -1 < \alpha < 0.
\]
Indeed, from this point onward the only other roots crossed as $\alpha$
continues to decrease are static ones (since the variable topological root $1 +
2\gamma < \alpha$ from now on and the variable geometric roots are symmetric
about $\alpha$ and bounded away from it by $\sqrt{\nu_1}$), the next being at
$\alpha = -1$, or $\gamma = -\tfrac 3 2.$
\end{proof}

\appendix
\bibliographystyle{amsalpha}
\bibliography{references}

\end{document}